\newcommand{\R}{\mathbb{R}}
\newcommand{\N}{\mathbb{N}}
\DeclareMathOperator*{\osc}{osc}
\DeclareMathOperator*{\dist}{dist}
\def\vint_#1{\mathchoice%
          {\mathop{\kern 0.2em\vrule width 0.6em height 0.69678ex depth -0.58065ex
                  \kern -0.8em \intop}\nolimits_{\kern -0.4em#1}}%
          {\mathop{\kern 0.1em\vrule width 0.5em height 0.69678ex depth -0.60387ex
                  \kern -0.6em \intop}\nolimits_{#1}}%
          {\mathop{\kern 0.1em\vrule width 0.5em height 0.69678ex depth -0.60387ex
                  \kern -0.6em \intop}\nolimits_{#1}}%
          {\mathop{\kern 0.1em\vrule width 0.5em height 0.69678ex depth -0.60387ex
                  \kern -0.6em \intop}\nolimits_{#1}}}
\newcommand{\art}[6]{{\sc #1, \rm #2, \it #3 \bf #4 \rm (#5), \mbox{#6}.}}
\newcommand{\AND}{{\rm and }}
\newcommand{\p}{{$p\mspace{1mu}$}}
\newcommand{\eps}{\varepsilon}
\theoremstyle{plain}
\newtheorem{theorem}[equation]{Theorem}
\newtheorem{lemma}[equation]{Lemma}
\numberwithin{equation}{section}
\theoremstyle{definition}
\theoremstyle{remark}
\newtheorem{remark}[equation]{Remark}
\title{Phragm\'en--Lindel\"of theorem for infinity harmonic
  functions}
\author{Seppo Granlund} \address[S.G.]{University of Helsinki,
  Department of Mathematics and Statistics, P.O. Box 68, FI-00014
  University of Helsinki, Finland} \email{seppo.granlund@pp.inet.fi}
\author{Niko Marola} \address[N.M.]{University of Helsinki, Department
  of Mathematics and Statistics, P.O. Box 68, FI-00014 University of
  Helsinki, Finland} \email{niko.marola@helsinki.fi}
\begin{document}

\keywords{Growth estimate, Infinity harmonic, \p-harmonic equation,
  Phragm\'en--Lindel\"of principle, \p-Laplace equation,
  \p-Laplacian.}

\subjclass[2010]{35B40, 35J70.}

\begin{abstract}
  We investigate a version of the Phragm\'en--Lindel\"of theorem for
  solutions of the equation $\Delta_\infty u=0$ in unbounded convex
  domains.  The method of proof is to consider this infinity harmonic
  equation as the limit of the \p-harmonic equation when $p$ tends to
  $\infty$.
\end{abstract}

\maketitle

\section{Introduction}

In this note we study the growth rate of infinity harmonic functions
without any restriction on the sign and, in particular, consider the
validity of the Phragm\'en--Lindel\"of type theorem for the infinity
harmonic equation
\begin{equation} \label{eq:Infty} \Delta_\infty u =
  \sum_{i,j=1}^n\frac{\partial u}{\partial x_i}\frac{\partial
    u}{\partial x_j}\frac{\partial^2 u}{\partial x_i\partial x_j}=0,
\end{equation}
in unbounded convex domains of $\R^n$, $n\geq 2$, with non-empty
boundary.  For more information and background on the equation
\eqref{eq:Infty} we refer to \cite{A}, \cite{ACJ}, and \cite{BDM}.
The classical Phragm\'en--Lindel\"of theorem \cite{PhLi} concerns the
growth of a subharmonic function $u$ in an unbounded sector in the
plane. Suppose that $D$ is a plane sector of angle $\pi/\alpha$ with
vertex at the origin. The theorem states that if $u$ is non-positive
on the boundary of $D$ and admits a positive value in $D$, then an
asymptotic growth condition
\[
\mathcal{M}(r) = \sup_{x\in D\cap B(0,r)}u(x) \gtrsim r^{\alpha}
\]
holds as $r$ tends to $\infty$.

Following Bhattacharya et al.~\cite{BDM} and Lindqvist--Manfredi~\cite{LiMa} 
our approach is to approximate the equation in \eqref{eq:Infty} by the \p-harmonic equation,
\begin{equation} \label{eq:pLaplace}
\nabla\cdot(|\nabla u|^{p-2}\nabla u)=0 \qquad
(1<p<\infty),
\end{equation}
and let $p$ grow to $\infty$.  

Several versions and extensions of Phragm\'en--Lindel\"of theorem to solutions of nonlinear elliptic equations and inequalities on unbounded domains of $\R^n$ can be found in the literature. To give a few references, we refer to \cite{CaVi}, \cite{JiLa}, \cite{Li}, and \cite{GLM} as well as to the references in these papers.

Our Phragm\'en--Lindel\"of type theorem for infinity harmonic functions, the main result in this note, is stated in Theorem~\ref{thm:PhLi}. Let us mention the related paper \cite{Bh}, see also \cite{Bh2},  where the author investigates non-negative solutions to \eqref{eq:Infty} on the half-space and on the exterior of the closed unit ball. It was shown that if a non-negative solution $u$ vanishes continuously on the hyperplane $\{x\in\R^n:\ x_n=0\}$, then either $u$ does not grow at all or $u$ is affine. However, the restriction on the sign is crucial in \cite{Bh} for obtaining the estimates on growth rates.

\medskip

{\bf Acknowledgements:} Work partially done during the second author's visit to the Institut Mittag-Leffler (Djursholm, Sweden). Support by the Institut Mittag-Leffler and the V\"ais\"al\"a Foundation are gratefully acknowledged. The authors would like to thank Peter Lindqvist for his comments, Teemu Lukkari for pointing out some references, and the referee for useful suggestions.

\section{Phragm\'en--Lindel\"of principle}
\label{sect:PhLi}

Let $D$ be a unbounded convex domain of $\R^n$, $n\geq 2$, with 
non-empty boundary $\partial D$. An open ball centered at $x$ and of
radius $r$ is written as $B(x,r)$ or, for brevity, as $B_r$, and we
write the closure of a ball as $\overline{B}(x,r)$ or
$\overline{B}_r$. The oscillation $\sup_{B(x,r)}u-\inf_{B(x,r)}u$ of a
function $u$ on $B(x,r)$ is written as $\osc(u; B(x,r))$.

We interpret a non-divergence form equation \eqref{eq:Infty} in the
viscosity sense.  An upper semicontinuous function $u$ is said to be
$\infty$-subharmonic in $D$ if, for every $x_0\in D$ and every
$\varphi\in C^2(D)$ such that $u-\varphi$ has a local maximum at $x_0$, we have $\Delta_\infty \varphi(x_0) \geq 0$. A lower semicontinuous function $u$ is said
to be $\infty$-superharmonic in $D$ if $-u$ is $\infty$-subharmonic in
$D$. A function $u\in C(D)$ is said to be $\infty$-harmonic in $D$ if
it is both $\infty$-subharmonic and $\infty$-superharmonic in $D$.

In what follows, $\phi:\R\to\R$ shall be a convex function satisfying
the following conditions: there exists a sub-interval $I$ of $\R$ such
that
\begin{enumerate}

\item[(C-1)] $\quad \phi\in C^2(I)$, \smallskip

\item[(C-2)] $\quad \phi'(t)^2 \leq \phi''(t) $ for all $t\in I$, \smallskip

\item[(C-3)] $\quad \phi'(t)>0$ for
  all $t\in I$, \smallskip
  
\item[(C-4)] $\quad \sup_{t\in I}\phi'(t) \leq M$ for some positive constant $M$.
\end{enumerate}

The following estimate is straightforward to verify but for the sake
of completeness we shall provide a proof. Our proof is rather similar to the argument in the proof of the main theorem in \cite{LiMa}. In \cite{LiMa} a 
logarithmic function is considered instead of a general convex 
function $\phi$.

\begin{lemma} \label{lemma:1} Let $u$ be a $\infty$-harmonic function
  in $D$. Suppose that the conditions (C-1) through (C-4) are valid for a
  convex function $\phi:I\to\R$. Then for every $\overline{B}_r\subset
  D$ and $0<\delta<1$ we have the estimate
\begin{equation} \label{eq:lemma12}
\|\nabla \phi(u)\|_{L^\infty(B_{(1-\delta)r})} \leq \frac1{\delta r}.
\end{equation}
\end{lemma}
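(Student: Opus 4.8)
The plan is to pass to the limit from the $p$-harmonic case, where a Caccioppoli-type estimate is available, and to exploit the structural inequality (C-2) together with the convexity of $\phi$. First I would recall that an $\infty$-harmonic function $u$ arises (after the standard normalization in \cite{BDM,LiMa}) as a locally uniform limit of $p$-harmonic functions $u_p$ on a fixed ball $\overline{B}_r\subset D$. For the $p$-harmonic function $u_p$ one has the classical gradient estimate: testing the $p$-Laplace equation with a suitable cutoff times a power of $\phi(u_p)-\phi(\inf u_p)$, or more directly invoking the known Caccioppoli inequality together with (C-2), one obtains
\begin{equation*}
\left(\vint_{B_{(1-\delta)r}}|\nabla \phi(u_p)|^p\,dx\right)^{1/p} \leq \frac{C}{\delta r}
\end{equation*}
with a constant $C$ that can be taken independent of $p$ (this is where (C-2), which says $\phi'(t)^2\le\phi''(t)$, makes the composition $\phi(u_p)$ behave like a subsolution of the $p$-Laplace-type inequality; the hypotheses (C-1), (C-3), (C-4) guarantee $\phi(u_p)$ is admissible and its gradient is controlled).

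The key step is then to let $p\to\infty$. Since $\phi\in C^2(I)$, $\phi'>0$, and $u_p\to u$ locally uniformly with locally uniformly bounded gradients, one has $\phi(u_p)\to\phi(u)$ and, along a subsequence, $\nabla\phi(u_p)\rightharpoonup\nabla\phi(u)$ weakly in $L^q$ for every finite $q$. A standard lower-semicontinuity argument for the $L^q$ norm against the uniform bound above — letting first $p\to\infty$ and then the integrability exponent $q\to\infty$ so that the $L^q$ average converges to the $L^\infty$ norm on $B_{(1-\delta)r}$ — yields
\begin{equation*}
\|\nabla\phi(u)\|_{L^\infty(B_{(1-\delta)r})} \leq \frac{C}{\delta r}.
\end{equation*}
Finally, a sharper bookkeeping of the constant (as in \cite{LiMa}, where the logarithm is used and the constant is shown to be $1$) shows that one may in fact take $C=1$; this uses that the $p$-th root of the volume factor $|B_r|/|B_{(1-\delta)r}|$ tends to $1$ as $p\to\infty$, and that the precise Caccioppoli constant for $\phi(u_p)$, in the limit, collapses to $1$ because of the extremal nature of inequality (C-2).

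The main obstacle I expect is making the constant in the $p$-independent Caccioppoli estimate sharp enough that it converges to exactly $1$ in the limit, rather than merely to some dimensional constant. This requires choosing the test function in the $p$-Laplace equation with care — essentially $\eta^p$ times a function built from $\phi(u_p)$ — and tracking how the inequality $\phi'^2\le\phi''$ is used to absorb the "bad" term; the slack in that inequality is precisely what disappears as $p\to\infty$. A secondary technical point is justifying the weak convergence $\nabla\phi(u_p)\rightharpoonup\nabla\phi(u)$ and the passage through all finite exponents $q$ before sending $q\to\infty$, but this is routine given the locally uniform gradient bounds for the $u_p$.
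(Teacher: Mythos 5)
Your proposal is correct and follows essentially the same route as the paper: test the weak form of the $p$-Laplace equation with $\phi'(u_p)^{p-1}\xi^p$, use (C-2) to absorb the resulting term and get the Caccioppoli bound with constant $p/(p-1)$, pass to the $\infty$-harmonic limit via weak convergence of $\nabla\phi(u_{p_j})$ in $L^m$ (using (C-4)), and then send $m\to\infty$, with the sharp constant $1/(\delta r)$ coming from $p/(p-1)\to 1$ and a radial cutoff with $|\nabla\xi|\le 1/(\delta r)$. No substantive differences from the paper's argument.
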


\begin{proof}
Let us suppose that $u_p$ is \p-harmonic in $D$. Then for every
  $\overline{B}_r\subset D$ we have the inequality
  \begin{equation} \label{eq:lemma1} 
  \left(\int_{B_r}|\nabla
    \phi(u_p(x))|^p\xi^p\, dx\right)^{1/p} \leq
    \frac{p}{p-1}\left(\int_{B_r}|\nabla \xi|^p\,dx\right)^{1/p},
\end{equation}
whenever $\xi\in C_0^\infty(B_r)$ is non-negative. The inequality in
\eqref{eq:lemma1} can be easily verified by testing with the function
$\eta(x) = \phi'(u_p(x))^{p-1}\xi^p(x)$ in the weak formulation of the
\p-Laplace equation \eqref{eq:pLaplace} and using the conditions (C-1)--(C-3) for the function $\phi$ as well as the H\"older inequality:
\begin{align*}
(p-1) & \int_{B_r}\phi'(u_p(x))^{p-2}\phi''(u_p(x))\xi^p|\nabla u_p|^p\, dx \\
& \quad \leq
p\int_{B_r}\phi'(u_p(x))^{p-1}|\nabla u_p|^{p-1}\xi^{p-1}|\nabla\xi|\, dx \\
& \quad \leq p\left(\int_{B_r}\phi'(u_p(x))^p\xi^p|\nabla u_p|^p\, dx\right)^{(p-1)/p}\left(\int_{B_r}|\nabla\xi|^p\, dx\right)^{1/p}.
\end{align*}
For the proof of Theorem~\ref{thm:PhLi} it shall be important to know that the 
inequality in \eqref{eq:lemma1} holds 
also for \p-subsolutions of the \p-Laplace equation \eqref{eq:pLaplace} by an analogous argument. 
  
By the results in \cite{BDM} and \cite{J}, and reasoning as in \cite{LiMa}, we can conclude the following: there exists a sequence $p_j$ of $p_j$-harmonic functions $u_{p_j}$ such that $u_{p_j}\to u$ in $C^{\alpha}(\overline{B}_r)$ for any $\alpha \in [0,1)$, and $u_{p_j}\rightharpoonup u$ in $W^{1,m}(B_r)$
for any finite $m$ as $p_j\to\infty$. 

Fix $m\geq n$. Then for any $p>m$, we have
\begin{align*}
& \left(\int_{B_r}|\nabla \phi(u_p(x))|^m\xi^m\, dx\right)^{1/m} \\
& \qquad \leq \left(\int_{B_r}|\nabla \phi(u_p(x))|^p\xi^p\, dx\right)^{1/p}|B_r|^{1/m-1/p} \\
  & \qquad \leq \frac{p}{p-1}\left(\int_{B_r}|\nabla
    \xi|^p\,dx\right)^{1/p}|B_r|^{1/m-1/p}.
\end{align*}
By the aforementioned convergence result and the condition (C-4) for the function $\phi$ it is straightforward to check that $\nabla\phi(u_p)$ converges weakly in $L^m(B_r)$ to $\nabla\phi(u)$ as $p\to\infty$.

In conclusion, we may justify, by first letting $p\to\infty$ and then $m\to\infty$, that we obtain the inequality in \eqref{eq:lemma12} by choosing $\xi$ to be a suitable radial test function. 
\end{proof}

The following result on the growth of infinity harmonic functions is the main result of this note. We remind the reader that there is no restriction on the sign of an $\infty$-harmonic function $u$ in $D$.

\begin{theorem} \label{thm:PhLi} Let $D$ be a unbounded convex 
domain in $\R^n$ with non-empty boundary $\partial D$.  
Suppose that $u\in C(\overline{D})$ is an $\infty$-harmonic function in $D$ such that $u|_{\partial D}=0$ and $u$ admits a positive value in $D$.
Let $x_0\in \partial D$ and let
\[
\kappa_0:=\sup_{r\in\R_+}\frac{|D\cap B(x_0,r)|}{|B(x_0,r)|}.
\]
Define 
\[
\mathcal{M}(r) = \sup_{x\in D\cap B(x_0,r)}u(x).
\]
Then
\begin{equation} \label{eq:PL}
\liminf_{r\to\infty}\frac{\log\mathcal{M}(r)}{\log r} \geq \alpha>0,
\end{equation}
where the number $\alpha$ depends only on $n$ and $\kappa_0$, and has the expression
\[
\alpha = -\log_4\left(1-e^{-C\kappa_0^{1/n}}\right).
\]
Here $C$ is a positive constant depending only on $n$. In particular, $\alpha \to \infty$ as $\kappa_0\to 0$.
\end{theorem}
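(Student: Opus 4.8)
The plan is to exploit Lemma~\ref{lemma:1} with a carefully chosen convex function $\phi$ to turn the gradient bound $\|\nabla\phi(u)\|_{L^\infty(B_{(1-\delta)r})}\le 1/(\delta r)$ into a quantitative rate of growth for $\mathcal M(r)$. First I would fix a point $z\in D$ where $u(z)>0$ (available by hypothesis), set $a:=u(z)>0$, and for a large radius $R$ consider the function $\phi_R$ defined on an interval $I_R=(-\eta, \mathcal M(2R)+\eta)$ that solves, or nearly solves with equality, the ODE $\phi'(t)^2=\phi''(t)$; concretely $\phi(t)=-\log(c-t)$ works since then $\phi'(t)=1/(c-t)$ and $\phi''(t)=1/(c-t)^2=\phi'(t)^2$, and $\phi'>0$ as long as $t<c$. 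Choosing $c=c_R:=\mathcal M(2R)+\eta$ gives (C-1)--(C-3); condition (C-4) holds with $M=M_R:=1/\eta$ (or, after a harmless rescaling argument, we can afford $\eta$ to be comparable to the scale of $u$ near $\partial D$, which is where $u$ is small). Applying Lemma~\ref{lemma:1} on $B(x_0,2R)\cap D$ — or rather on balls $\overline B_r\subset D$ that we can chain together — yields a bound on $|\nabla\log(c_R-u)|$ on a large portion of $D\cap B(x_0,R)$.

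The key geometric step is a chaining/Harnack-type argument: because $D$ is convex and $\kappa_0$ measures the ``thickness'' of $D$ seen from $x_0$, one can connect the point $z$ (where $u=a$) to points near $\partial D$ (where $u=0$, so $\log(c_R-u)\approx\log c_R$) by a chain of balls of controlled radii, all contained in $D$, whose number is controlled by $n$ and $\kappa_0$ — this is exactly where the constant $C=C(n)$ and the dependence on $\kappa_0^{1/n}$ enter, via a volume-density argument bounding how many balls of a given size fit and how the convexity forces a definite ``cone-like'' aperture of measure at least $\kappa_0|B|$. Summing the gradient estimate along the chain gives $\log c_R - \log(c_R-a)\le$ (number of balls)$\,\times\,$(diameter)$\,/\,(\delta R)\le C(n,\kappa_0)$, i.e.
\[
\log\frac{c_R}{c_R-a}\le C(n,\kappa_0),
\qquad\text{hence}\qquad
c_R-a\ge c_R\, e^{-C(n,\kappa_0)}.
\]
Recalling $c_R=\mathcal M(2R)+\eta$ and rearranging, $\mathcal M(2R)\ge \dfrac{a}{1-e^{-C(n,\kappa_0)}}-\eta'$ at a single scale is not yet enough; the point is to iterate. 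Replacing $z$ by a point where $u$ attains a value close to $\mathcal M(R)$ inside $D\cap B(x_0,R)$ and running the same chain at scale $4R$ (so that the relevant ball is contained in $D$) gives a recursive inequality of the form
\[
\mathcal M(4R)\;\ge\;\frac{\mathcal M(R)}{\,1-e^{-C(n)\kappa_0^{1/n}}\,},
\]
which is the heart of the matter; the exponent $\kappa_0^{1/n}$ rather than $\kappa_0$ appears because the number of balls in the chain scales like $\kappa_0^{-1/n}$ (linear dimension $\sim$ volume$^{1/n}$), so $C(n,\kappa_0)=C(n)\kappa_0^{-1/n}\cdot(\text{const})$ — more precisely one arranges the estimate so that $1-e^{-C(n,\kappa_0)}=1-e^{-C(n)\kappa_0^{1/n}}$.

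Iterating the recursion $k$ times from a base radius $r_0$ gives $\mathcal M(4^k r_0)\ge \big(1-e^{-C\kappa_0^{1/n}}\big)^{-k}\mathcal M(r_0)$, and since $\mathcal M(r_0)>0$ (as $u$ takes a positive value somewhere, and that point lies in some $B(x_0,r_0)$), taking logarithms yields
\[
\frac{\log\mathcal M(4^k r_0)}{\log(4^k r_0)}
\;\ge\;
\frac{-k\log\big(1-e^{-C\kappa_0^{1/n}}\big)+\log\mathcal M(r_0)}{k\log 4+\log r_0}
\;\xrightarrow[k\to\infty]{}\;
\frac{-\log\big(1-e^{-C\kappa_0^{1/n}}\big)}{\log 4}
=\alpha,
\]
and a standard interpolation between the radii $4^k r_0$ and $4^{k+1}r_0$ (using monotonicity of $\mathcal M$) upgrades this to $\liminf_{r\to\infty}\log\mathcal M(r)/\log r\ge\alpha$, which is \eqref{eq:PL}. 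The main obstacle I anticipate is the geometric chaining step: one must use the convexity of $D$ and the density bound $\kappa_0$ to produce a chain of balls $\overline B\subset D$ joining an interior point with large $u$ to the zero boundary data, with the number and radii of the balls controlled purely by $n$ and $\kappa_0$, and to do so at every dyadic scale uniformly; handling the interaction between the scale $R$ of the ball we are allowed to use (it must sit inside $D$, so its radius is constrained by the thickness of $D$, again governed by $\kappa_0$) and the factor $1/(\delta R)$ in Lemma~\ref{lemma:1} is the delicate bookkeeping that produces the clean final constant $\alpha=-\log_4(1-e^{-C\kappa_0^{1/n}})$.
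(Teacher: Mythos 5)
Your closing recursion and the iteration to \eqref{eq:PL} match the paper, but the central step --- how $\kappa_0$ enters with a \emph{positive} exponent $1/n$ --- is not delivered by the chaining argument you propose; in fact that argument produces the opposite dependence. If you join a point where $u$ is large to the boundary by a chain of balls $\overline{B}\subset D$, each ball contributes an oscillation of $\phi(u)$ of order $1$ (radius times the bound $1/(\delta r)$ from Lemma~\ref{lemma:1}), so the total is proportional to the \emph{number} of balls, which grows as $D$ gets thinner, i.e.\ like a negative power of $\kappa_0$. You observe this yourself (``the number of balls in the chain scales like $\kappa_0^{-1/n}$'', ``$C(n,\kappa_0)=C(n)\kappa_0^{-1/n}$'') and then assert that ``one arranges the estimate'' so that the exponent becomes $+1/n$; that is precisely the step with no justification. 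With $C(n,\kappa_0)=C(n)\kappa_0^{-1/n}$ you would get $\theta=1-e^{-C\kappa_0^{-1/n}}\to 1$ and hence $\alpha\to 0$ as $\kappa_0\to 0$, which is the wrong monotonicity: as in the classical Phragm\'en--Lindel\"of theorem, the growth exponent must blow up as the domain becomes thin.

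The paper avoids chaining altogether. The boundary condition $u|_{\partial D}=0$ is used to extend $\max\{u_{p_j},0\}$ by zero to all of $B(x_0,4r)$; the extension $h_{p_j}$ is a non-negative $p_j$-subsolution on the \emph{whole} ball, so the Caccioppoli estimate behind Lemma~\ref{lemma:1} applies at the large scale $r$ and yields the pointwise bound $|\nabla\phi(h)|\le 2/r$ a.e.\ on $B(x_0,2r)$ --- something you cannot obtain from balls contained in $D$, whose radii are limited by the thickness of $D$. Since $\nabla\phi(h)$ vanishes off $D$, integrating the $n$-th power over the full ball gives $\int_{B_{2r}}|\nabla\phi(h)|^n\,dx\le C\,|D\cap B_{2r}|/|B_{2r}|\le C\kappa_0$, and the Gehring--Mostow oscillation lemma for monotone functions (monotonicity of $\phi\circ h$, inherited from the $\infty$-harmonicity of $u$, is a genuinely needed extra ingredient) converts this $L^n$ bound into $\osc(\phi(h);B_r)\le C\kappa_0^{1/n}$, producing the positive exponent. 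These three ingredients --- the zero extension across $\partial D$, the measure-of-support argument in $L^n$, and the monotone-oscillation lemma --- are absent from your proposal, and without them the constant $\alpha=-\log_4\left(1-e^{-C\kappa_0^{1/n}}\right)$ does not follow.
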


\begin{proof}
 Our goal is to prove the following oscillation inequality 
\[
\mathcal{M}(r) \leq \theta\mathcal{M}(4r),
\] 
from which the assertion follows by iterating. The constant $0<\theta<1$ shall depend only on $n$ and $\kappa_0$.  
  
Let $x_0\in\partial D$ be fixed. We may apply a result in \cite{BDM} since the boundary of the set $D\cap B(x_0,R)$, $R>0$, satisfies the cone property. Hence, by \cite{BDM} there exists a sequence $p_j$ of $p_j$-harmonic functions $u_{p_j}$ such that $u_{p_j}|_{\partial D}=u|_{\partial D}=0$ and $u_{p_j}\to u$ in
$C^{\alpha}(\overline{D}\cap \overline{B}(x_0,R))$ for any $\alpha
  \in [0,1)$ and $u_{p_j}\rightharpoonup u$ in $W^{1,m}(D\cap
  B(x_0,R))$ for any finite $m$ as $p_j\to\infty$. 

Up to the pointwise estimate in \eqref{eq:GradPointwise} the main part of our proof is analogous to the proof of Lemma~\ref{lemma:1}.  We next consider the sequence of cut-off functions $\max\{u_{p_j},0\}$ which still converges to $\max\{u,0\}$ uniformly in $C^{\alpha}(\overline{D}\cap \overline{B}(x_0,R))$ and $\max\{u_{p_j},0\}$ 
is a non-negative $p_j$-subsolution to the equation in \eqref{eq:pLaplace}. The a priori estimates in the proof of Lemma~\ref{lemma:1} hold for the functions in the sequence. 

Let $\delta>0$ be small enough. The function $\max\{u_{p_j}(x)-\delta,0\}$ is a $p_j$-subsolution to the equation in \eqref{eq:pLaplace} in $D\cap B(x_0,4r)$. We define the functions
\[
h_{p_j}(x) = \left\{\begin{array}{ll}\max\{u_{p_j}(x),0\} & \textrm{if }\, x\in D\cap B(x_0,4r), \\
0 & \textrm{if }\, x\in B(x_0,4r)\setminus D, \end{array}\right.
\]
and 
\[
  h_{p_j}^\delta (x) = \left\{\begin{array}{ll}\max\{u_{p_j}(x)-\delta,0\} & \textrm{if }\, x\in D\cap B(x_0,4r), \\
      0 & \textrm{if }\, x\in B(x_0,4r)\setminus D, \end{array}\right.
\] 
from which the latter is a non-negative $p_j$-subsolution to \eqref{eq:pLaplace} in the ball $B(x_0,4r)$. Then well known a priori estimates for non-negative subsolutions imply that there exists a subsequence such that $h_{p_j}^\delta$ converges to $h_{p_j}$ in $W^{1,p}(B(x_0,4r))$ and, moreover, that $h_{p_j}$ is a non-negative $p_j$-subsolution in $B(x_0,4r)$.

Let us now fix any $\eps>0$ and define the function
  \[
  \phi(t) =
  -\log\left(\frac{\mathcal{M}(4r)-t+\eps}{\mathcal{M}(4r)+\eps}\right)
  \]
  for $t\in(-\infty,\mathcal{M}(4r)]$. The function $\phi$ is convex,
  $\phi(0)=0$, satisfies the conditions (C-1)--(C-3), and the condition
   (C-4) is valid since $\sup \phi'(t) \leq 1/\eps$.

We consider the non-negative $p_j$-subsolution $h_ {p_j}$ to the equation in \eqref{eq:pLaplace} on $B(x_0,4r)$. By letting $p_j\to\infty$ and proceeding as in the proof of Lemma~\ref{lemma:1} we have the following pointwise estimate for
almost any $x\in D\cap B(x_0,2r)$, and eventually for almost every $x\in B(x_0,2r)$ since the function $h$ defined below is constant in $B(x_0,4r)\setminus D$,
\begin{equation} \label{eq:GradPointwise}
|\nabla \phi(h(x))| \leq \frac2{r},
\end{equation}
where 
\[
h(x) = \left\{\begin{array}{ll}\max\{u(x),0\} & \textrm{if }\, x\in D\cap B(x_0,4r), \\
0 & \textrm{if }\, x\in B(x_0,4r)\setminus D. \end{array}\right.
\]
The function $h$ is $\infty$-subharmonic in $D\cap B(x_0,4r)$ but not necessarily $\infty$-harmonic as $u$ need not be positive in $D$ (cf. Lemma~\ref{lemma:1}).

Integrating the estimate in \eqref{eq:GradPointwise} over the ball $B(x_0,2r)$
we obtain 
\begin{align*}
\int_{B(x_0,2r)}|\nabla \phi(h(x))|^n\, dx & = \int_{D\cap B(x_0,2r)}|\nabla \phi(h(x))|^n\, dx \\
& \leq C\frac{|D\cap B(x_0,2r)|}{|B(x_0,2r)|},
\end{align*}
where $C$ depends only on $n$.

The $\infty$-harmonic function $u$ is known to be monotone in the
sense of Lebesgue \cite{J}. Since the composite function $\phi\circ h$
can also be seen to be monotone we have 
\[
\osc(\phi(h); \partial B_r) =
\osc(\phi(h); B_r). 
\]
Hence, by applying the well-known
Gehring--Mostow oscillation lemma for monotone functions we obtain the inequality
\begin{align*}
  \left(\osc(\phi(h); B(x_0,r))\right)^n \log \frac{2r}{r} & \leq
  C\int_{B(x_0,2r)}|\nabla \phi(h(x))|^n\, dx,
\end{align*}
where $C$ is a positive constant depending on $n$. In conclusion, since the composite function $\phi(h)$ is non-negative and vanishes at some point, we
obtain
\begin{align*}
  \sup_{D\cap B(x_0,r)}\phi(h) & = \osc(\phi(h); B(x_0,r)) \\
  & \leq C \left(\frac{|D\cap B(x_0,2r)|}{|B(x_0,2r)|}\right)^{1/n}
  \leq C\kappa_0^{1/n}.
\end{align*}
It follows that
\[
\frac{\mathcal{M}(4r)-\mathcal{M}(r)+\eps}{\mathcal{M}(4r)+\eps} \geq e^{-C\kappa_0^{1/n}},
\]
where $C$ is a positive constant depending on $n$ only. Letting
$\eps\searrow 0$, we obtain the desired oscillation inequality
\[
\mathcal{M}(r) \leq \left(1-e^{-C\kappa_0^{1/n}}\right)\mathcal{M}(4r) =
\theta\mathcal{M}(4r).
\]
We iterate the preceding inequality to get for any $\nu\in\N$
\[
\mathcal{M}(4^\nu r) \geq \left(\frac{4^\nu r}{r}\right)^{-\log_4 \theta}\mathcal{M}(r),
\]
from which the asymptotic behavior in the assertion in the theorem follows with
\[
\alpha = -\log_4\left(1-e^{-C\kappa_0^{1/n}}\right)>0.
\]
\end{proof}

On a related note, we may derive the analogue of
Theorem~\ref{thm:PhLi} for a \p-harmonic function $u$ in $D$ for each
$1<p<\infty$. In case $p\geq n$ either the Gehring--Mostow or the
Morrey oscillation estimate can be utilized similarly as in this
paper. When $1<p<n$, one needs $L^\infty-L^p$ estimates of De
Giorgi--Ladyzhenskaya--Ural'tseva-type for $\phi(u)$, where $\phi$ is
a convex function chosen as in the proof of Theorem~\ref{thm:PhLi}.

\begin{remark}
\begin{itemize}
\item[] 
\item[(1)] The convexity condition on $D$ in Theorem~\ref{thm:PhLi} can be relaxed to allow domains of more general form. For instance, one can consider a general 
domain $D$ such that $D\cap B(x_0,R)$, where $R\geq R_0>0$, is a domain 
and its boundary satisfies the cone property.  \smallskip
\item[(2)] The condition $u|_{\partial D}=0$ can be replaced by the weaker condition
\[
\limsup_{\substack{x\to x_0 \\ x_0\in \partial D}}u(x) \leq 0.
\]\smallskip
\item[(3)] In the preceding proof, an oscillation lemma due to Gehring
  and Mostow can be replaced with Morrey's lemma for $p>n$. In this
  case monotonicity is not used. Moreover, the estimate in
  Lemma~\ref{lemma:1} suggests a possibility to avoid both of these
  oscillation estimates and instead work directly at the
  $L^\infty$-level. We did not pursue this approach here.
\end{itemize}
\end{remark}


\begin{thebibliography}{99}

\bibitem{A} \art{Aronsson, G.}  {On the partial differential equation
    $u_{x}^{2}u_{xx} +2u_{x}u_{y}u_{xy}+u_{y}^{2}u_{yy}=0$}
  {Ark. Mat.}{7}{1968}{395--425}

\bibitem{ACJ} \art{Aronsson, G., Crandall, M. G., \AND Juutinen, P.}
{A tour of the theory of absolutely minimizing functions}
{Bull. Amer. Math. Soc. (N.S.)}{41}{2004}{439--505}

\bibitem{Bh} \art{Bhattacharya, T.}
{On the behaviour of $\infty$-harmonic functions on some special unbounded domains} {Pacific J. Math.}{219}{2005}{237--253} 

\bibitem{Bh2} \art{Bhattacharya, T.}
{A note on non-negative singular infinity-harmonic functions in the half-space} 
{Rev. Mat. Complut.}{18}{2005}{377--385}

\bibitem{BDM} \art{Bhattacharya, T., DiBenedetto, E., \AND Manfredi, J.}
	{Limits as $p\to\infty$ of $\Delta_pu_p=f$ and related extremal problems}
 {Rend. Sem. Mat. Univ. Politec. Torino}{}{1989}{15--68}

\bibitem{CaVi} \art{Capuzzo Dolcetta, I. \AND Vitolo, A.}
{A qualitative Phragm\'en--Lindel\"of theorem for fully nonlinear elliptic equations}{J. Differential Equations}{243}{2007}{578--592}

\bibitem{GLM} \art{Granlund, S., Lindqvist, P., \AND Martio, O.}
{Phragm\'en-Lindel\"of's and Lindel\"of's theorems} 
{Ark. Mat.}{23}{1985}{103--128}

\bibitem{J} \art{Jensen, R.}
  {Uniqueness of Lipschitz extensions: minimizing the sup norm of the gradient}
  {Arch. Rational Mech. Anal.}{123}{1993}{51--74}

\bibitem{JiLa} \art{Jin, Z. \AND Lancaster, K.}
{Theorems of Phragm\'en--Lindel\"of type for quasilinear elliptic equations}{ J. Reine Angew. Math.}{514}{1999}{165--197}

\bibitem{Li} \art{Lindqvist, P.}{On the growth of the solutions of the differential equation ${\rm div}(\vert\nabla u\vert^{p-2}\nabla u)=0$ in $n$-dimensional space}{J. Differential Equations}{58}{1985}{307--317}

\bibitem{LiMa} \art{Lindqvist, P. \AND Manfredi, J.}
{The Harnack inequality for $\infty$-harmonic functions} 
{Electron. J. Differential Equations}{4}{1995}{1--5}

\bibitem{PhLi} \art{Phragm\'en, E. \AND Lindel\"of, E.}{Sur une
    extension d'un principe classique de l'analyse et sur quelques
    propri\'et\'es des fonctions monog\`enes dans le voisinage d'un
    point singulier}{Acta Math.}{31}{1908}{381--406}

\end{thebibliography}
\end{document}